\newcommand{\bbC}{\mathbb{C}}
\newcommand{\g}{\mathfrak{g}}
\newcommand{\prs}{\langle\;,\;\rangle}
\newcommand{\too}{\longrightarrow}
\newcommand{\esp}{\quad\mbox{and}\quad}
\newcommand{\R}{\mathbb{R}}
\newcommand{\G}{{\mathfrak{g}}}
\newcommand{\h}{{\mathfrak{h}}}
\newcommand{\ad}{{\mathrm{ad}}}
\newcommand{\ric}{{\mathrm{ric}}}
\newcommand{\Li}{\mathrm{L}}
\newcommand{\na}{\nabla}
\newcommand{\al}{\alpha}
\newcommand{\e}{\epsilon}
\newcommand{\la}{\lambda}
\newtheorem{theo}{Theorem}[section]
\newtheorem{pr}{Proposition}[section]
\newtheorem{Le}{Lemma}[section]
\begin{document}

\begin{frontmatter}
	
	%% Title, authors and addresses
	
	%% use the tnoteref command within \title for footnotes;
	%% use the tnotetext command for the associated footnote;
	%% use the fnref command within \author or \address for footnotes;
	%% use the fntext command for the associated footnote;
	%% use the corref command within \author for corresponding author footnotes;
	%% use the cortext command for the associated footnote;
	%% use the ead command for the email address,
	%% and the form \ead[url] for the home page:
	%%
	%\title{Left invariant para-K\"ahler and hyper-para-K\"ahler structures on Lie groups\tnoteref{label1}}
	%% \tnotetext[label1]{}
	%\author{\corref{cor1}\fnref{label2}}
	
	%% \ead{email address}
	%% \ead[url]{home page}
	%% \fntext[label2]{}
	%% \cortext[cor1]{}
	%% \address{Address\fnref{label3}}
	%\fntext[label3]{This research was conducted within the framework of Action concert\'ee CNRST-CNRS Project SPM04/13.}
	
	\title{   Kenmotsu  Lie groups}
	
	%% use optional labels to link authors explicitly to addresses:
	\author[label1]{ Mohamed Boucetta}

	\address[label1]{Universit\'e Cadi-Ayyad\\
		Facult\'e des sciences et techniques\\
		BP 549 Marrakech Maroc\\e-mail: m.boucetta@uca.ac.ma
	}

	% \address[label3]{}
	
	%\author{}
	
	%\address{}
	
	\begin{abstract} Kenmotsu manifolds constitute an important subclass of the class of contact Riemannian manifolds. In this note, we determine entirely connected and simply-connected Lie groups having a left invariant Kenmotsu structure. We show also that these Lie groups are Einstein Riemannian manifolds. 
		
	\end{abstract}
	
	\begin{keyword} Kenmotsu manifolds \sep Lie groups \sep Lie algebras   \sep   \sep 
		%% keywords here, in the form: keyword \sep keyword
		\MSC 53C25  \sep \MSC 22E25 
		
		%% MSC codes here, in the form: \MSC code \sep code
		%% or \MSC[2008] code \sep code (2000 is the default)
		
	\end{keyword}
	
\end{frontmatter}

%%
%% Start line numbering here if you want
%%
% \linenumbers

%% main text

%% The Appendices part is started with the command \appendix;
%% appendix sections are then done as normal sections
%% \appendix

%% \section{}
%% \label{}

%% References
%%
%% Following citation commands can be used in the body text:
%% Usage of \cite is as follows:
%%   \cite{key}         ==>>  [#]
%%   \cite[chap. 2]{key} ==>> [#, chap. 2]
%%

%% References with bibTeX database:

\section{Introduction and main result}\label{section1}

Contact geometry is a crucial area in modern differential geometry. K. Kenmotsu introduced a distinctive class of contact Riemannian manifolds which later became known as Kenmotsu manifolds. Kenmotsu demonstrated that a Kenmotsu manifold is locally a warped product $I\times_f N$
where $I$
is an interval and $N$
is a K\"ahler manifold, with the warping function $f(t)=ke^t$
where $k$ 
is a non-zero constant. Over the years, many researchers have explored the geometry of Kenmotsu manifolds \cite{I, II, III, IV, V}.

An almost contact metric manifold is a $(2n+1)$-dimensional Riemannian manifold $(M,h)$ that admits a $(1, 1)$-tensor field $\phi$ and a characteristic vector field $\xi$ satisfying the following relations
\begin{equation}\label{almost} 
	\phi^2(X)=-X+\eta(X)\xi,\;
	h(\phi(X),\phi(Y))=h( X,Y)-\eta(X)\eta(Y),
\end{equation}for any  $X,Y\in TM$ where $\eta(X)=h( X,\xi)$. If the Levi-Civita connection $\na$ of an almost contact metric manifold $(M, g, \phi, \xi)$
satisfies
\begin{equation}\label{kenmostu}
	\na_X(\phi)Y=h(\phi X,Y)-\eta(Y)\phi(X)
\end{equation}for any $X,Y\in TM$, then it is called a Kenmotsu  manifold \cite{KENMOTSU}.

A {\it Kenmotsu Lie group} is a Lie group $G$ endowed with a Kenmotsu structure $(h,\phi,\xi)$ where $(h,\phi,\xi)$ are left invariant. In this Note, we give a complete description of Kenmotsu Lie groups by proving the following theorem.

\begin{theo}\label{main} Let $(G,h,\phi,\xi)$ be a connected and simply-connected Kenmotsu Lie group of dimension $2n+1$. Then there exists $(\la_1,\ldots,\la_n)\in\R^n$ such that  $(G,h,\phi,\xi)$ is isomorphic to $(\R\times\mathbb{C}^n,h_0,\phi_0,\xi_0)$ where the group product is given by
	\[ (t_1,(z_1,\ldots,z_n)).
	(t_2,(z_1',\ldots,z_n'))=(t_1+t_2,
	(z_1+z_1'e^{-t_1(1+\imath\la_1)},\ldots,z_n+z_n'e^{-t_1(1+\imath\la_n)})), \]
	$h_0=dt^2+e^{2t}\mathrm{can}$, $\phi_0(\frac{\partial}{\partial t})=0$, the restriction of $\phi_0$ to $\mathbb{C}^n$ is the canonical complex structure of $\mathbb{C}^n$,  $\xi_0=\frac{\partial}{\partial t}$ and $\mathrm{can}$ is the canonical metric of $\mathbb{C}^n$. Moreover, $(G,h)$ is Einstein and its Ricci curvature satisfies $\ric=-2n h$.
\end{theo}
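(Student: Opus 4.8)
The plan is to pass to the Lie algebra $\G$ of $G$, carrying the inner product $h$, the endomorphism $\phi$ and the vector $\xi$, and to show that $\G$ is forced to be the semidirect product in the statement. I first record the standard consequence of \eqref{kenmostu}: setting $Y=\xi$ gives $\na_X\xi=X-\eta(X)\xi$ for every $X$. Writing $\G=\R\xi\oplus\D$ with $\D=\xi^\perp=\ker\eta$, metric compatibility of the Levi--Civita connection (computed from Koszul's formula, all differentiation terms being dropped since the fields are left invariant) yields $h(\na_XY,\xi)=-h(X,Y)$ for $X,Y\in\D$; hence $h([X,Y],\xi)=0$, so $\D$ is a subalgebra, and since $[\xi,X]=\na_\xi X-X$ with $\na_\xi$ preserving $\D$, the space $\D$ is an ideal and $\G=\R\xi\ltimes\D$.

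The two structural facts I extract next are the heart of the argument. Writing $N:=\na_\xi|_\D$, metric compatibility shows $N$ is skew-symmetric, \eqref{kenmostu} with $X=\xi$ gives $N\phi=\phi N$, and $\mathrm{ad}_\xi|_\D=N-\mathrm{id}$. Secondly, denoting by $B(X,Y)$ the $\D$-component of $\na_XY$ for $X,Y\in\D$, the map $B$ is a torsion-free metric connection, i.e. the Levi--Civita connection of $(\D,h|_\D)$ as a Lie algebra, and the $\D$-part of \eqref{kenmostu} reads $B(X,\phi Y)=\phi B(X,Y)$, that is $\na^\D\phi=0$; thus $(\D,\phi|_\D,h|_\D)$ is a \emph{K\"ahler} Lie algebra. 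On the other hand $D:=\mathrm{ad}_\xi|_\D=N-\mathrm{id}$ is a derivation of $\D$ whose symmetric part is $\tfrac12(D+D^*)=\tfrac12((N-\mathrm{id})+(-N-\mathrm{id}))=-\mathrm{id}$; hence every complex eigenvalue of $D$ has real part $-1$, so $D$ is nonsingular. By Jacobson's theorem a Lie algebra admitting a nonsingular derivation is nilpotent, so $\D$ is nilpotent, and a nilpotent K\"ahler Lie algebra is abelian. Therefore $[\D,\D]=0$ and $B=0$: this is the step I expect to be the main obstacle, and it is precisely where the rigidity forcing the canonical flat factor $\mathbb C^n$ originates.

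With $\D$ abelian I normalise $N$. Viewing $(\D,\phi|_\D)$ as a Hermitian complex vector space, the real skew-symmetric operator $N$ commuting with $\phi|_\D$ is skew-Hermitian, hence unitarily diagonalisable; in a suitable $\phi$-adapted orthonormal basis $\{e_j,f_j=\phi e_j\}_{j=1}^n$ one gets $Ne_j=-\la_jf_j$, $Nf_j=\la_je_j$ for real $\la_j$. The brackets become $[\xi,e_j]=-e_j-\la_jf_j$, $[\xi,f_j]=\la_je_j-f_j$, $[\D,\D]=0$, which is exactly the Lie algebra of the stated $\R\ltimes\mathbb C^n$. Since $G$ is connected and simply connected, this Lie algebra isomorphism integrates to a group isomorphism onto the simply-connected model; a short computation shows that left-translating the chosen orthonormal basis produces the metric $dt^2+e^{2t}\mathrm{can}$ and carries $(\phi,\xi)$ to $(\phi_0,\xi_0)$, proving the first assertion.

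Finally, for the Einstein statement I compute the curvature directly from $\na_XY=-h(X,Y)\xi$, $\na_X\xi=X$, $\na_\xi X=NX$ (for $X,Y\in\D$) and $\na_\xi\xi=0$. One finds $R(X,\xi)\xi=-X$ and, for orthonormal $X,Y\in\D$, $R(X,Y)Y=-X$, the contributions of $N$ cancelling by its skew-symmetry; thus every sectional curvature equals $-1$ and $(G,h)$ has constant curvature $-1$ (indeed it is $\mathbb H^{2n+1}$, independently of the $\la_j$). In dimension $2n+1$ this gives $\ric=-2n\,h$, so $(G,h)$ is Einstein, completing the proof.
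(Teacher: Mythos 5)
Your proposal is correct and follows essentially the same route as the paper: decompose $\G=\R\xi\oplus\ker\eta$, show the kernel is a K\"ahler ideal on which $\ad_\xi$ is a derivation with symmetric part $-\mathrm{id}$ commuting with $\phi$, invoke Jacobson and Benson--Gordon to force it to be abelian, and diagonalize the skew part to recover the model brackets. The only genuine addition is your explicit curvature computation, which shows $(G,h)$ has constant sectional curvature $-1$ (it is $\mathbb{H}^{2n+1}$ independently of the $\la_j$), a sharper statement than the Einstein condition $\ric=-2nh$ that the paper leaves as a ``straightforward computation.''
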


Beside the introduction, this Note contains one section devoted to the proof of Theorem \ref{main}.

\section{Proof of Theorem \ref{main}}\label{section2}

In this section, we aim to prove Theorem \ref{main}. To do so, we proceed by establishing many intermediary results.

We start by describing our model of Kenmotsu Lie groups. Let $\la=(\la_1,\ldots,\la_n)\in\R^n$. Consider the Lie group $G_\la$ whose underlying manifold is $\R\times\bbC^n$ and the multiplication is given by
\begin{equation}\label{mult} (t_1,(z_1,\ldots,z_n)).
	(t_2,(z_1',\ldots,z_n'))=(t_1+t_2,
	(z_1+z_1'e^{-t_1(1+\imath\la_1)},\ldots,z_n+z_n'e^{-t_1(1+\imath\la_n)})). \end{equation}
The Lie algebra $\G_\la$ of $G_\la$ can be identified as a vector space to $T_{(0,0)}G_\la=\R\times\bbC^n$. For any $(a,u)\in\G_\la$, we denote by $(a,u)^\ell$ the left invariant vector field on $G_\la$ associated to $(a,u)$.
We have
\begin{align*} (a,u)^l(t,z)&=\frac{d}{ds}_{|s=0}(t,z).(sa,su)\\&=
	\frac{d}{ds}_{|s=0}(t+sa,
	(z_1+su_1e^{-t(1+\imath\la_1)},\ldots,z_n+su_ne^{-t(1+\imath\la_n)}))\\&=
	(a,u_1e^{-t(1+\imath\la_1)},\ldots,u_ne^{-t(1+\imath\la_n)})) \end{align*}
Denote by $e_0=1$ a generator of $\R$,  $(e_1,\ldots,e_n)$ the canonical basis of $\bbC^n$ as a complex vector space and put $f_j=\imath e_j$. We have, for any $j\in\{1,\ldots,n\}$,
\[ e_0^\ell=\frac{\partial}{\partial t},\; e_j^l=
e^{-t}\left(\cos(\la_j t)\frac{\partial}{\partial x_j}-\sin(\la_j t)\frac{\partial}{\partial y_j}\right)\esp f_j^l=
e^{-t}\left(\sin(\la_j t)\frac{\partial}{\partial x_j}+\cos(\la_j t)\frac{\partial}{\partial y_j}\right),
\]where $(t,x_1,y_1,\ldots,x_n,y_n)$ is the coordinates system associated to the basis $(e_0,e_1,f_1,\ldots,e_n,f_n)$. We have obviously that, for any $j,k\in\{1,\ldots,n\}$,
\begin{equation}\label{bracket} 
	[e_j^\ell,e_k^\ell]=[f_j^\ell,f_k^\ell]=[e_j^\ell,f_k^\ell]=0,\;
	[e_0^\ell,e_j^\ell]=-e_j^\ell-\la_j f_j^\ell\esp [e_0^\ell,f_j^\ell]=-f_j^\ell+\la_j e_j^\ell.
\end{equation}
Let $h_0$ be the left invariant metric on $G_\la$ for which $(e_0^\ell,e_1^\ell,f_1^\ell,\ldots,e_n^\ell,f_n^\ell)$ is orthonormal. From the relations
\[ \frac{\partial}{\partial x_j}=
e^{t}\left(\cos(\la_j t)e_j^\ell+\sin(\la_j t)f_j^\ell\right)\esp \frac{\partial}{\partial y_j}=
e^{t}\left(-\sin(\la_j t)e_j^\ell+\cos(\la_j t)f_j^\ell\right) \]we deduce that
\[ h_0=dt^2+e^{2t}\sum_{j=1}^n\left(dx_j^2+dy_j^2\right). \]
Finally, put $\xi_0=\frac{\partial}{\partial t}=e_0^\ell$ and denote by $\phi_0$ the left invariant $(1,1)$-tensor field given by $\phi_0(e_0^\ell)=0$, $\phi_O(e_j^\ell)=f_j^\ell$ and $\phi_O(f_j^\ell)=-e_j^\ell$ for any $j\in\{1,\ldots,n\}$.

Now we establish a useful lemma.
\begin{Le}\label{le} Let $V$ be a real vector space of dimension $2n$, $\prs$ a scalar product on $V$,  $J:V\too V$ a skew-symmetric isomorphism such that $J^2=-\mathrm{Id}_V$ and $D:V\too V$ an endomorphism such that $[J,D]=0$ and $D+D^t=-2\mathrm{Id}_V$ where $D^t$ is the adjoint of 
	$D$ with respect to $\prs$. Then there exists $(\la_1,\ldots,\la_n)\in\R^n$ and an orthonormal basis $(e_1,Je_1,\ldots,e_n,Je_n)$ of $V$ such that, for any $j\in\{1,\ldots,n\}$,
	\[ De_j=-e_j-\la_j Je_j\esp DJe_j=-Je_j+\la_j e_j. \]
	
\end{Le}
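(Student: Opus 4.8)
Here is how I would approach Lemma \ref{le}.

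\textbf{Reduction to the skew-symmetric part.} The first thing I would do is separate $D$ into its symmetric and skew-symmetric parts, $D=\frac12(D+D^t)+\frac12(D-D^t)$. The hypothesis $D+D^t=-2\,\mathrm{Id}_V$ says precisely that the symmetric part is $-\mathrm{Id}_V$, so $D=-\mathrm{Id}_V+A$ where $A:=\frac12(D-D^t)$ is skew-symmetric. Since $[J,\mathrm{Id}_V]=0$, the hypothesis $[J,D]=0$ becomes $[J,A]=0$. The conclusion to be proved then unwinds: writing $D=-\mathrm{Id}_V+A$, the identity $De_j=-e_j-\la_j Je_j$ is equivalent to $Ae_j=-\la_j Je_j$, and the companion identity $DJe_j=-Je_j+\la_j e_j$ is then automatic because $AJe_j=JAe_j=J(-\la_j Je_j)=\la_j e_j$ using $[J,A]=0$ and $J^2=-\mathrm{Id}_V$. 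So the whole lemma reduces to finding a $J$-adapted orthonormal basis $(e_1,Je_1,\ldots,e_n,Je_n)$ and reals $\la_j$ with $Ae_j=-\la_j Je_j$.

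\textbf{The key observation.} Applying $J$ to $Ae_j=-\la_j Je_j$ turns it into $JAe_j=\la_j e_j$, so the real task is to diagonalize the operator $B:=JA$ in a $J$-adapted orthonormal basis, the $\la_j$ being its eigenvalues. The crux — and essentially the only non-routine step — is that $B$ is \emph{symmetric}: using $J^t=-J$, $A^t=-A$ and the commutation $AJ=JA$, one gets $B^t=A^tJ^t=(-A)(-J)=AJ=JA=B$. Moreover $B$ commutes with $J$, since $JB=J^2A=-A$ and $BJ=J(AJ)=J(JA)=J^2A=-A$, whence $[B,J]=0$. Thus $B$ is a symmetric endomorphism commuting with the complex structure $J$.

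\textbf{Finishing by the spectral theorem.} Being symmetric, $B$ gives an orthogonal decomposition $V=\bigoplus_\mu V_\mu$ into its eigenspaces for the distinct real eigenvalues $\mu$. Because $[B,J]=0$, each $V_\mu$ is $J$-invariant, and on $V_\mu$ the restriction of $J$ is again a skew-symmetric operator with square $-\mathrm{Id}$, i.e. a compatible complex structure. A standard induction then produces a $J$-adapted orthonormal basis of each $V_\mu$: pick a unit vector $e$, note $(e,Je)$ is orthonormal (since $\prs$-skewness of $J$ gives $\langle e,Je\rangle=0$ and orthogonality of $J$ gives $\|Je\|=1$), observe that $\spa\{e,Je\}^\perp\cap V_\mu$ is again $J$-invariant, and repeat; in particular each $\dim V_\mu$ is even. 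Concatenating these bases over all $\mu$ yields an orthonormal basis $(e_1,Je_1,\ldots,e_n,Je_n)$ of $V$ with $Be_j=\la_j e_j$, where $\la_j$ is the eigenvalue of the block containing $e_j$. Finally $A=-JB$ gives $Ae_j=-J(\la_j e_j)=-\la_j Je_j$, which by the reduction above is exactly the desired formula $De_j=-e_j-\la_j Je_j$ (together with its companion). The only place demanding care is the symmetry of $JA$; once that is in hand, the argument is pure spectral theory for $J$-invariant symmetric operators.
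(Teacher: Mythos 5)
Your proof is correct, but it takes a genuinely different route from the paper's. The paper complexifies: it endows $V$ with the complex structure induced by $J$ and the Hermitian metric $h(u,v)=\langle u,v\rangle+\imath\langle u,Jv\rangle$, observes that $[J,D]=0$ makes $D$ complex-linear and that $D+D^t=-2\,\mathrm{Id}_V$ forces $[D,D^t]=0$, so $D$ is normal and hence unitarily diagonalizable over $\mathbb{C}$ with eigenvalues $\alpha_j-\imath\la_j$; it then pins down $\alpha_j=-1$ by noting that $A=D+\mathrm{Id}_V$ is skew-symmetric, so $\langle Ae_j,e_j\rangle=0$. You instead stay entirely real: you split off the symmetric part at the outset (which is where $D+D^t=-2\,\mathrm{Id}_V$ is used, giving $D=-\mathrm{Id}_V+A$ immediately rather than at the end), and your key observation that $B=JA$ is a symmetric operator commuting with $J$ reduces everything to the real spectral theorem plus the standard construction of a $J$-adapted orthonormal basis of each eigenspace. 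The two arguments are of course two faces of the same fact — your $\la_j$, the eigenvalues of $JA$, are exactly the imaginary parts of the eigenvalues of the normal operator $D$ in the paper — but yours is more elementary (no complex spectral theorem for normal operators, no Hermitian structure) at the cost of having to carry out the eigenspace-by-eigenspace basis construction explicitly, while the paper's is shorter because unitary diagonalizability hands it the adapted basis in one stroke. Both are complete; your only mildly compressed step, the induction producing the $J$-adapted basis inside each $V_\mu$, is spelled out enough to be checkable.
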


\begin{proof} The real vector space $V$ has also a structure of complex vector space given by
	\[ (a+\imath b).v=av+bJv,\quad a,b\in\R, v\in V, \]and since $[J,D]=0$ then $D$ is a complex endomorphism. Moreover, the symmetric bilinear form given by
	\[ h(u,v)=\langle u,v\rangle+\imath\langle u,Jv\rangle \]is a Hermitian metric and $D^t$ is also the adjoint of $D$ with respect to $h$.
	From the relation $D+D^t=-2\mathrm{Id}_V$, we deduce that
	$[D,D^t]=0$, i.e., $D$ is normal and hence it  is diagonalizable. Thus there exists $(\al_1,\la_1,\ldots,\al_n,\la_n)$ and a $h$-orthonormal complex basis $(e_1,\ldots,e_n)$ such that, for any $j\in\{1,\ldots,n\}$,
	\[ De_j=(\al_j-\imath \la_j)e_j=\al_j e_j-\la_j Je_j. \]
	But $A=D+\mathrm{id}_\h$ is skew-symmetric and 
	\[ \langle Ae_j,e_j\rangle=\langle (\al_j+1)e_j-\la_j Je_j,e_j\rangle=(\al_j+1)\langle e_j,e_j\rangle=0 \]and hence $\al_j=-1$ which completes the proof.
\end{proof}

Let $G$ be a Lie group and $\G$ its Lie algebra identified with the space of left invariant vector fields on $G$. For any left invariant metric $h$ on $\G$, we denote by $\prs$ its restriction to $\G$. The Levi-Civita product $\Li:\G\times\G\too\G$ is the restriction of the Levi-Civita connection $\na$ of $h$ to $\G$ and it is given by
\begin{equation}
	2\langle \Li_XY,Z\rangle=\langle[X,Y],Z\rangle+\langle[Z,X],Y\rangle+\langle[Z,Y],X\rangle,\quad X,Y,Z\in\G.
\end{equation} 
This product is characterized by the facts that, for any $X,Y\in\G$, \begin{equation}\label{Levi}\Li_X+\Li_X^t=0 \esp  \Li_XY-\Li_YX=[X,Y].\end{equation}
A left invariant almost complex structure on $G$ is entirely determined by an isomorphism $J:\G\too\G$ such that $J^2=-\mathrm{id}_\G$. When $h$ is Hermitian with respect to $J$ and $\na J=0$, $(G,h,J)$ is called a K\"ahler Lie group and $(\G,\prs,J)$ is called a K\"ahler Lie algebra. Note that the condition $\na J=0$ is equivalent to $[\Li_X,J]=0$ for any $X\in\G$.

Recall that a Kenmotsu Lie  group is a Lie group $G$ endowed with a left invariant metric $h$, a left invariant $(1,1)$-tensor field $\phi:TG\too TG$ and a left invariant vector field $\xi$ satisfying \eqref{almost} and \eqref{kenmostu}.  Since $(h,\phi)$ are left invariant, they are entirely determined by their restrictions $(\prs,\phi)$ to $\G$.  
The relations \eqref{almost} and \eqref{kenmostu} are equivalent to
\begin{equation}\label{kenmostubis} \begin{cases}
		\langle \xi,\xi\rangle=1,\;	\phi^2(X)=-X+\eta(X)\xi,\;
		\langle\phi(X),\phi(Y)\rangle=\langle X,Y\rangle-\eta(X)\eta(Y),\\
		\Li_X\phi(Y)-\phi(\Li_XY)=\langle\phi(X),Y\rangle\xi-\eta(Y)\phi(X),
\end{cases} \end{equation}for any $X,Y\in\G$, where $\eta$ is the left invariant 1-form given by $\eta(X)=\langle X,\xi\rangle$.

\begin{pr}\label{pr} For any $X,Y\in\G$,
	
	\begin{equation}\begin{cases} \phi(\xi)=0,\; \eta\circ\phi=0,\; \phi^t+\phi=0,\\
			\Li_X\xi=X-\eta (X) \xi,\; [\Li_\xi,\phi]=0,\;  \prec\eta,\Li_XY\succ=-\langle X,Y\rangle+\eta(X)\eta(Y),\end{cases} \end{equation}where $\phi^t$ is the adjoint of $\phi$ with respect to $\prs$.
	In particular, $d\eta=0$.
\end{pr}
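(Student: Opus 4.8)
The plan is to derive the six identities in sequence, using only the algebraic relations in \eqref{kenmostubis} together with the two characteristic properties of the Levi-Civita product in \eqref{Levi}. The key is to respect the correct ordering, since several of the identities depend on the previous ones.

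First I would establish $\phi(\xi)=0$. Setting $X=Y=\xi$ in the compatibility relation $\langle\phi(X),\phi(Y)\rangle=\langle X,Y\rangle-\eta(X)\eta(Y)$ gives $\langle\phi(\xi),\phi(\xi)\rangle=1-1=0$; since $\prs$ is positive definite (it is the restriction of a Riemannian metric), this forces $\phi(\xi)=0$. This positivity is the one place where the Riemannian, as opposed to merely algebraic, nature of the structure enters, and it is the only genuinely delicate point. Next, to obtain $\eta\circ\phi=0$ I would compute $\phi^3$ in two ways: from $\phi^2(X)=-X+\eta(X)\xi$ one gets $\phi^3(X)=-\phi(X)+\eta(X)\phi(\xi)$ on one side and $\phi^3(X)=-\phi(X)+\eta(\phi(X))\xi$ on the other; comparing them and using $\phi(\xi)=0$ together with $\xi\neq0$ yields $\eta(\phi(X))=0$. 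The skew-symmetry $\phi^t+\phi=0$ then follows by substituting $X\mapsto\phi(X)$ in the compatibility relation and invoking $\phi^2(X)=-X+\eta(X)\xi$ and the already proved $\eta\circ\phi=0$.

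For the connection identities I would use the fourth relation of \eqref{kenmostubis}, namely $\Li_X\phi(Y)-\phi(\Li_XY)=\langle\phi(X),Y\rangle\xi-\eta(Y)\phi(X)$. Putting $Y=\xi$ and using $\phi(\xi)=0$ and $\eta\circ\phi=0$ collapses it to $\phi(\Li_X\xi-X)=0$; since $\phi^2(Z)=-Z+\eta(Z)\xi$ shows $\ker\phi=\R\xi$, this gives $\Li_X\xi-X=c(X)\xi$ for some scalar. Pairing with $\xi$ and using the skew-symmetry $\Li_X+\Li_X^t=0$ (which forces $\langle\Li_X\xi,\xi\rangle=0$) pins down $c(X)=-\eta(X)$, hence $\Li_X\xi=X-\eta(X)\xi$. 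Putting instead $X=\xi$ in the same relation and using $\phi(\xi)=0$ makes the whole right-hand side vanish, giving $[\Li_\xi,\phi]=0$.

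The identity $\eta(\Li_XY)=-\langle X,Y\rangle+\eta(X)\eta(Y)$ then follows immediately: $\eta(\Li_XY)=\langle\Li_XY,\xi\rangle=-\langle Y,\Li_X\xi\rangle$ by skew-symmetry of $\Li_X$, and substituting $\Li_X\xi=X-\eta(X)\xi$ gives the claim. Finally, for $d\eta=0$ I would use that for left invariant objects $d\eta(X,Y)=-\eta([X,Y])$ and that $[X,Y]=\Li_XY-\Li_YX$; the preceding identity is symmetric in $X$ and $Y$, so $\eta(\Li_XY)=\eta(\Li_YX)$ and therefore $\eta([X,Y])=0$. Everything after the first step is a careful but routine chain of substitutions, so the main obstacle is simply arranging the deductions so that each one has at its disposal exactly the identities it needs.
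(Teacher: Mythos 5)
Your proof is correct and follows essentially the same route as the paper: the connection identities are derived from the last relation of \eqref{kenmostubis} together with the skew-symmetry of $\Li_X$, exactly as in the paper's argument. The only difference is that you spell out the verification of the first-line algebraic identities ($\phi(\xi)=0$, $\eta\circ\phi=0$, $\phi^t+\phi=0$), which the paper dismisses as easily checked, and your verifications are correct.
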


\begin{proof} The relations in the first line of \eqref{kenmostubis} can be checked easily. From the last relation in \eqref{kenmostubis} and \eqref{Levi}, we have $\phi(\Li_X\xi)=\phi(X)$ and $\eta(\Li_X\xi)=\langle\Li_X\xi,\xi\rangle=0$.
	Thus
	\[ \Li_X\xi=X-\eta (X) \xi. \]
	Since $\phi(\xi)=0$, we deduce from the last relation in \eqref{kenmostubis} that
	$[\Li_\xi,\phi]=0$. On the other hand,
	\begin{align*}
		\prec\eta,\Li_XY\succ&=\langle \Li_XY,\xi\rangle=-\langle Y,\Li_X\xi\rangle\\
		&=-\langle X,Y\rangle+\eta(X)\eta(Y).
	\end{align*}
	Finally, for any $X,Y\in\G$,
	\[ d\eta(X,Y)=-\eta([X,Y])=-\eta(\Li_XY)+\eta(\Li_YX)=0. \]
\end{proof}
Put $\h=\xi^\perp=\ker\eta$. From the relation $d\eta=0$, we have $[\g,\g]\subset\h$ and hence $\h$ is an ideal of $\G$. Moreover, since $\eta\circ\phi=0$, $\phi$ leaves $\h$ invariant and induces a    isomorphism $J$ on $\h$. Denote by $D$ (resp. $\prs_\h$) the restriction of $\ad_\xi$ (resp. $\prs$) to $\h$.
\begin{pr} $(\h,\prs_\h,J)$ is a K\"ahler Lie algebra and $D$ is an invertible derivation of $\h$ satisfying $[D,J]=0$ and $D+D^t=-2\mathrm{Id}_\h$.
\end{pr}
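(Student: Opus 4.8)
The plan is to treat separately the elementary linear-algebraic statements about $J$ and $D$, which drop out of Proposition \ref{pr} almost immediately, and the K\"ahler identity, which is the only delicate point. First I would handle $J$ and $D$. Restricting the first line of \eqref{kenmostubis} to $\h=\ker\eta$ gives $J^2=-\mathrm{id}_\h$ and $\langle JX,JY\rangle=\langle X,Y\rangle$, while $\phi^t+\phi=0$ from Proposition \ref{pr} makes $J$ skew-symmetric, so $(\h,\prs_\h,J)$ is almost-Hermitian. Since $\h$ is an ideal and $\ad_\xi$ is a derivation of $\G$, its restriction $D$ is a derivation of $\h$. Next, using $\Li_X\xi=X-\eta(X)\xi$ together with \eqref{Levi}, I would write, for $Y\in\h$, $DY=[\xi,Y]=\Li_\xi Y-\Li_Y\xi=\Li_\xi Y-Y$, hence $D=\Li_\xi|_{\h}-\mathrm{id}_\h$. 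Because $\Li_\xi$ is skew-symmetric and $\Li_\xi\xi=0$, it preserves $\h=\xi^\perp$ and its restriction is skew; this gives $D+D^t=-2\mathrm{Id}_\h$, while $[D,J]=0$ is exactly $[\Li_\xi,\phi]=0$ read on $\h$. Invertibility of $D$ then follows from $\langle Du,u\rangle=\tfrac12\langle(D+D^t)u,u\rangle=-\langle u,u\rangle$, which forces $\ker D=\{0\}$.

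The core of the proposition is the identity $[\Li^\h_X,J]=0$ for all $X\in\h$, where $\Li^\h$ is the Levi-Civita product of $(\h,\prs_\h)$; this is equivalent to $\na^\h J=0$, i.e.\ to $(\h,\prs_\h,J)$ being K\"ahler. The hard part is that $\Li^\h$ is \emph{not} the restriction of $\Li$: the last relation of Proposition \ref{pr} gives $\langle\Li_XY,\xi\rangle=-\langle X,Y\rangle$ for $X,Y\in\h$, so $\Li_XY$ carries a nonzero $\xi$-component. To get around this I would compare the Koszul formula for $\Li^\h$ with that for $\Li$; since $\h$ is a subalgebra the brackets involved stay in $\h$, and one reads off that $\Li^\h_XY$ is the orthogonal projection of $\Li_XY$ onto $\h$. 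Combined with the $\xi$-component just computed, this yields the key relation $\Li^\h_XY=\Li_XY+\langle X,Y\rangle\xi$ for $X,Y\in\h$.

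With this formula the K\"ahler condition is a one-line verification. Taking $X,Y\in\h$ in the last relation of \eqref{kenmostubis}, and using $\eta(Y)=0$, $\phi(\xi)=0$ and $\phi|_{\h}=J$, I would substitute $\Li_XY=\Li^\h_XY-\langle X,Y\rangle\xi$ and $\Li_X(JY)=\Li^\h_X(JY)-\langle X,JY\rangle\xi$. Collecting the terms along $\xi$ produces the coefficient $\langle JX,Y\rangle+\langle X,JY\rangle$, which vanishes because $J$ is skew-symmetric, and what remains is precisely $\Li^\h_X(JY)=J(\Li^\h_XY)$. This establishes $\na^\h J=0$, so $(\h,\prs_\h,J)$ is a K\"ahler Lie algebra, completing the proof.
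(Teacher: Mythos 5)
Your proof is correct and follows essentially the same route as the paper: both arguments hinge on identifying $\Li^\h_XY$ with the orthogonal projection of $\Li_XY$ onto $\h$, computing the $\xi$-component of $\Li_XY$, and then splitting the last relation of \eqref{kenmostubis} into its $\h$- and $\xi$-parts to extract $[\Li^\h_X,J]=0$. The only real divergence is in how the conditions on $D$ are obtained: you derive $D+D^t=-2\mathrm{Id}_\h$ and $[D,J]=0$ from the identity $D=\Li_\xi|_{\h}-\mathrm{id}_\h$ (a consequence of $\Li_Y\xi=Y-\eta(Y)\xi$ and the skew-symmetry of $\Li_\xi$), whereas the paper reads $D+D^t=-2\mathrm{Id}_\h$ off the $\xi$-component of the Kenmotsu relation and gets $[J,D]=0$ from $\Li_\xi=\frac12(D-D^t)$ together with $[J,D-D^t]=0$; your route is slightly more economical and reuses Proposition \ref{pr} more directly. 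You also make explicit two points the paper leaves implicit, namely that $D$ is a derivation of $\h$ (because $\h$ is an ideal and $\ad_\xi$ is a derivation of $\G$) and that $D$ is invertible (from $\langle Du,u\rangle=-\langle u,u\rangle$), which is a welcome addition since both facts are used later in the proof of the main theorem.
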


\begin{proof} We have obviously from \eqref{kenmostubis} that $J^2=-\mathrm{Id}_\h$ and $h$ is Hermitian with respect to $J$. Let us compute the Levi-Civita product $\Li_X^\h Y$ of $(\h,\prs_\h)$. For any $X,Y,Z\in\h$,
	\begin{align*}
		2\langle \Li_X^\h Y,Z\rangle_\h=
		2\langle \Li_X Y,Z\rangle
	\end{align*}and hence 
	$\Li_X^\h Y-
	\Li_X Y\in\R\xi$. Moreover, since $\h$ is an ideal and by virtue of \eqref{Levi},
	\[ 2\langle \Li_X Y,\xi\rangle=\langle DX,Y\rangle+\langle DY,X\rangle. \]We deduce that
	\[ \Li_XY=\Li_X^\h Y+\frac12\left(\langle DX,Y\rangle_\h+\langle DY,X\rangle_\h \right)\xi,\quad X,Y\in\G. \]From the last relation in \eqref{kenmostubis} and since $\phi(\Li_XY)=\phi(\Li_X^\h Y)$, we get
	\begin{align*}
		0&=\Li_X\phi(Y)-\phi(\Li_XY)-\langle JX,Y\rangle_\h\xi\\
		&=\Li_X^\h JY+\frac12\left(\langle DX,JY\rangle_\h+\langle DJY,X\rangle_\h \right)\xi-J(\Li_X^\h Y)-\langle JX,Y\rangle_\h\xi.
	\end{align*}
	Thus
	\[ [\Li_X,J]=0\esp D+D^t=-2\mathrm{Id}_\h. \]	
	On the other hand,
	\[ 2\langle \Li_\xi X,Y\rangle=\langle DX,Y\rangle-\langle DY,X\rangle \]
	and hence
	\[ \Li_\xi X=\frac12(DX-D^tY),\quad X,Y\in\G. \]
	From the last relation in \eqref{kenmostubis},
	\[ \Li_\xi JX-J\Li_\xi X=0 \]and hence
	\[ [J,D-D^t]=0\esp [J,D]=0. \]
\end{proof}

Now, we get  all the needed ingredients to complete the proof of Theorem \ref{main}.

Let $(G,h,\phi,\xi)$ be a Kenmotsu Lie group of dimension $2n+1$. We have shown that $\G=\h\oplus\R\xi$ where $\h$ is an ideal having an invertible derivation $D=(\ad_\xi)_{|\h}$ and a complex isomorphism $J$ such that $(\h,\prs_\h,J)$ is a K\"ahler Lie algebra, $[J,D]=0$ and $D+D^t=-2\mathrm{Id}_\h$. According to \cite{Jacobson}, a Lie algebra with an invertible derivation must be nilpotent. On the other hand, a nilpotent K\"ahler Lie algebra must be abelian (see \cite{Benson}). Now, according to Lemma \ref{le}, there exists $(\la_1,\ldots,\la_n)\in\R^n$ and an orthonormal basis $(e_1,Je_1,\ldots,e_n,Je_n)$ of $\h$ such that, for any $j\in\{1,\ldots,n\}$,
\[ De_j=-e_j-\la_j Je_j\esp DJe_j=-Je_j+\la_j e_j. \] 
By comparing these Lie brackets to \eqref{bracket}, one can see that $(\G,\prs,\phi,\xi)$ is isomorphic to the Lie algebra  of the model $(G_\la,h_0,\phi_0,\xi_0)$. The fact that
$(G_\la,h_0)$ is Einstein and its Ricci curvature satisfies $\ric=-2n h_0$ can be checked by a straightforward computation. 
This completes the proof of Theorem \ref{main}.

\end{document}